\documentclass[reqno,12pt,a4paper]{amsart}
\usepackage{amscd,amsfonts,amssymb,amsthm,latexsym}
\usepackage{mathtools}
\usepackage{bm}
\usepackage{srcltx}

\theoremstyle{plain}
\newtheorem{theorem}{Theorem}[section]
\newtheorem*{theorem*}{Theorem}

\newtheorem{lemma}{Lemma}[section]

\theoremstyle{definition}

\newtheorem*{definition*}{Definition}

\theoremstyle{remark}

\newtheorem*{remark*}{Remark}

\numberwithin{equation}{section}

\textwidth=15cm
\oddsidemargin=5mm
\evensidemargin=5mm

\begin{document}
\raggedbottom %Ќужно, чтобы текст вертикально на раст€гивалс€

\title[A large integer is a sum of two prime avoiding numbers]{A large integer is a sum of two prime avoiding numbers}

\author{Artyom Radomskii}

\begin{abstract} Let $f(n)=\min_{p} |n-p|$, where $p$ is a prime. We show that there is a positive constant $\delta$ such that for any large integer $N$ there exist two positive integers $n_1$ and $n_2$ such that $N=n_1 + n_2$ and $f(n_i)\geq \ln N (\ln\ln N)^{\delta}$, $i=1, 2$.
\end{abstract}

 \address{Steklov Mathematical Institute of Russian Academy of Sciences\\
 8 Gubkina St., Moscow 119991, Russia}

\keywords{Gaps, sieves, prime avoiding numbers.}

\email{artyom.radomskii@mail.ru}

\maketitle

\section{Update}
The same result was obtained by M.\,R. Gabdullin.  This version will not be published in this form. It will appear in a joint manuscript by M.\,R. Gabdullin and A.\,O. Radomskii.

\section{Introduction}

Let $f(n)= \min_{p}|n-p|$, where $p$ is a prime. We prove the following result.

\begin{theorem}\label{T1}
Let $\delta\in (0, 1/2)$ be such that
\begin{equation}\label{Basic_delta}
12\cdot 10^{2\delta}< \ln \bigg(\frac{1}{2\delta}\bigg).
\end{equation}Then for any large integer $N$ there exist two positive integers $n_1$ and $n_2$ such that $N=n_1+ n_2$
 and $f(n_i)\geq \ln N (\ln\ln N)^{\delta}$, $i=1, 2$.
\end{theorem}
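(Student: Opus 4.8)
\medskip

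The plan is to recast the assertion as a simultaneous covering problem and to solve it by adapting the Erd\H{o}s--Rankin large-gap construction so that it controls \emph{both} summands at once. Fix $z=c\ln N$ with a constant $c\in(0,1/2)$, set $P=\prod_{p\le z}p$ (so $P<\sqrt N$ by the prime number theorem), and put $y=c'\ln N(\ln\ln N)^{\delta}$ with $c'>0$ small. It is enough to produce a residue class $a\bmod P$ such that $\gcd(a+i,P)>1$ and $\gcd(N-a+i,P)>1$ for every integer $i$ with $|i|\le y$: then any $n_1\equiv a\pmod P$ with $n_1\in(N/3,2N/3)$, together with $n_2=N-n_1$, works, since for $|i|\le y$ both $n_1+i$ and $n_2+i$ exceed $z$ and are divisible by a prime $\le z$, hence are composite, so $f(n_j)\ge y\gg\ln N(\ln\ln N)^{\delta}$. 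Writing $a_p=a\bmod p$, this amounts to choosing $a_p$ for each prime $p\le z$ so that the congruences $i\equiv-a_p\pmod p$ cover $\{i\in\mathbb Z:|i|\le y\}$ and, simultaneously, the congruences $i\equiv a_p-N\pmod p$ cover the same set.

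I would build the class in two stages. Stage~1: for all primes $p\le z^{2\delta}$ put $a_p=0$. In the first system this removes every $i$ having a prime factor $\le z^{2\delta}$, so the survivors $U_1$ are the $z^{2\delta}$-rough integers in $[-y,y]$, and a Buchstab/Mertens sieve bound gives $|U_1|\ll y/(\delta\ln z)\asymp\ln N/\bigl(\delta(\ln\ln N)^{1-\delta}\bigr)$; in the second system the same choice removes every $i$ with $N+i$ possessing a small prime factor, leaving a translated rough set $U_2$ of the same order. (The prime $2$ and the $O(\ln N/\ln\ln N)$ primes dividing $N$ need only a trivial adjustment.) Stage~2: use the remaining primes $z^{2\delta}<p\le z$ --- there are $(1+o(1))\,z/\ln z\asymp\ln N/\ln\ln N$ of them --- to finish covering $U_1$ in the first system and $U_2$ in the second, via the covering mechanism underlying the known lower bounds for Jacobsthal's function (Erd\H{o}s, Rankin): thin each rough set further using the medium primes, each chosen greedily so that $-a_p$ (resp.\ $a_p-N$) lands in the densest surviving residue class, and then clean up the small remainder with the largest primes, roughly one element per prime. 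The genuinely new feature is that \emph{one} choice of $a_p$ must serve both systems: once $a_p$ is fixed to help the first, the class $a_p-N\bmod p$ swept in the second is determined, so primes spent efficiently on one summand supply only crude ``random'' coverage of the other, and the two budgets must be balanced against each other. Running the bookkeeping, the argument closes exactly when the covering reserve of the top range,
\[
\sum_{z^{2\delta}<p\le z}\frac1p=\ln\!\left(\frac1{2\delta}\right)+o(1),
\]
exceeds the total cost of the double covering, which the sieve estimates control by a quantity of size $12\cdot10^{2\delta}$; this is precisely the inequality \eqref{Basic_delta}, and (together with $\delta<1/2$) it is what makes the whole scheme go through.

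The crux --- and the step I expect to be the main obstacle --- is Stage~2: forcing a single family $\{a_p\}$ to cover two \emph{linked} systems of arithmetic progressions while keeping the quantitative estimates tight enough to admit $\delta$ as large as the threshold in \eqref{Basic_delta}, which in particular means the medium-prime thinning and the final clean-up have to be split between the two systems without losing more than a constant factor. The remaining ingredients --- the Buchstab-type count in Stage~1 with an explicit error term, the pigeonhole gains for the medium primes, the handling of $p=2$ and of $p\mid N$, and the passage from ``$n_1+i,n_2+i$ composite'' to the stated lower bound for $f$ --- are routine.
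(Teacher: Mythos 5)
Your heuristic is on the right track --- the paper's threshold $12\cdot 10^{2\delta}<\ln(1/(2\delta))$ does encode a competition between a covering reserve of size $\ln(1/(2\delta))$ and a cost of size $12\cdot 10^{2\delta}$ --- but the greedy Erd\H{o}s--Rankin mechanism you propose for Stage~2 cannot close the argument, and the origin of $\ln(1/(2\delta))$ is different.

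Here is the quantitative obstruction. After Stage~1 (residue $0$ for all $p\le z^{2\delta}$) the survivors $U_1$ have size roughly $y\prod_{p\le z^{2\delta}}(1-1/p)\asymp y/(\delta\ln\ln N)\asymp \ln N\,(\ln\ln N)^{\delta-1}/\delta$. Greedy pigeonhole on a medium prime $p$ removes at least a $1/p$ fraction of the \emph{current} survivors, so after exhausting all $p\in(z^{2\delta},z]$ the count drops by at most the factor $\prod_{z^{2\delta}<p\le z}(1-1/p)\asymp 2\delta$, leaving $\asymp \ln N\,(\ln\ln N)^{\delta-1}$ survivors. But the only primes left to clean up one element at a time are $O(\ln N/\ln\ln N)$ in number, and $(\ln\ln N)^{\delta-1}\gg (\ln\ln N)^{-1}$ for every $\delta>0$. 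So the cleanup fails before you even get to the genuine difficulty you identify (the linkage between the two systems). The greedy bound is simply not strong enough to realize the heuristic $\sum_{z^{2\delta}<p\le z}1/p=\ln(1/(2\delta))$ as usable excess coverage.

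The paper's mechanism is structurally different in two ways. First, the small/medium cutoff is placed much higher: small primes run all the way up to $z=y(\ln\ln x)^2/(\ln x)^{1/2}$, which is $x$ up to a fractional power of $\ln x$, and the residues mod these primes are chosen \emph{at random} (a single random class $\mathbf{b}\bmod P_z$), not set to $0$. This makes the sieved set $\mathbf{S}\cap[1,y]$ of size $\asymp\sigma y\asymp\ln N\,(\ln\ln N)^{\delta-1}$. Second --- and this is the crucial missing ingredient in your sketch --- the medium primes $q\in(z,x/2]$ are not used greedily. Each $q$, placed at a scale $H_q\asymp y/q$, contributes a random arithmetic progression of length $\asymp KH_q$, and a R\"odl-nibble-type hypergraph covering lemma (\cite[Lemma 3.1]{FKMPT}, Lemma~\ref{L3} here) shows that these long APs can be selected deterministically to cover all but a small fraction of the sparse sieved set. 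The covering reserve $\ln(1/(2\delta))$ arises not from $\sum 1/p$ over medium primes (that sum is in fact $o(1)$ over the paper's window $(z,x/2]$) but from the scale sum $\sum_{H\in\mathfrak{H}}1/\ln H$, and the factor $1/12$ in front of it is $\tfrac{1}{2}\cdot\tfrac{1}{M}$ with $M\to 6$: the $1/2$ is exactly the price of splitting the scales into two disjoint halves $\mathfrak{H}_1,\mathfrak{H}_2$, one dedicated to each summand, so that the two linked systems use \emph{disjoint} prime families rather than sharing each $a_p$. That splitting, together with the hypergraph covering lemma, is what replaces the unbalanced greedy/random trade-off you were hoping to control; without some analogue of both, the argument does not close.
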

Theorem \ref{T1} improves the trivial result that for any large integer $N$ there exist two positive integers $n_1$ and $n_2$ such that $N= n_1+ n_2$ and $f(n_i)\gg \ln N$, $i=1, 2$. To prove Theorem \ref{T1}, we used the technique from the paper \cite{FKMPT}.

\section{Notation}

We use $X\ll Y$, $Y\gg X$, or $X=O(Y)$ to denote the estimate $|X|\leq C Y$ for some constant $C>0$. Throughout the remainder of the paper, all implied constants in $O$, $\ll$ or $\gg$ may depend on quantities $\delta$, $M$, $K$, and $\xi$ which we specify in the next section. We also assume that the quantity $x$ is sufficiently large in terms of all of these parameters.

The notation $X=o(Y)$ as $x\to \infty$ means $\lim_{x\to \infty} X/Y = 0$ (holding other parameters fixed). The notation $X\sim Y$ as $x\to \infty$ means $\lim_{x\to \infty} X/Y = 1$ (holding other parameters fixed).

If $S$ is a statement, we use $1_{S}$ to denote its indicator, thus $1_{S}=1$ when $S$ is true and $1_{S}=0$ when $S$ is false.

We will rely on probabilistic methods in this paper. Boldface symbols such as $\mathbf{b}$, $\mathbf{n}$, $\mathbf{S}$, $\bm{\lambda}$, etc. denote random variables (which may be real numbers, random sets, random functions, etc.) Most of these random variables will be discrete (in fact they will only take on finitely many values), so that we may ignore any technical issues of measurability. We use $\mathbb{P}(E)$ to denote the probability of a random event $E$, and $\mathbb{E}(\mathbf{X})$ to denote the expectation of the random (real-valued) variable $\mathbf{X}$.

The symbols $p$ and $q$ (as well as variants such as $p_1$, $p_2$, etc.) will always denote primes.

If $x$ is a real number, then $[x]$ denotes its integral part, and $\lceil x \rceil$ is the smallest integer $n$ such that $n \geq x$.

By $\# A$ we denote the number of elements of a finite set $A$.

\section{Proof of Theorem \ref{T1}}

We may assume that
\begin{equation}\label{Basic_Delta_2}
\ln \bigg(\frac{1}{2\delta}\bigg) \leq 1000.
\end{equation} We put
\begin{equation}\label{Def_y_z}
y=\lceil x (\ln x)^{\delta}\rceil,\quad\qquad z=\frac{y (\ln \ln x)^{2}}{(\ln x)^{1/2}}.
\end{equation} We define
\begin{gather*}
P_{x}:= \prod_{p\leq x}p,\quad\qquad \sigma_{x}:= \prod_{p\leq x}\bigg(1-\frac{1}{p}\bigg),\\
 P_{z,x}:=\prod_{z<p\leq x}p,\quad\qquad \sigma_{z,x}:= \prod_{z<p\leq x}\bigg(1-\frac{1}{p}\bigg).
\end{gather*} It is well-known that
\begin{equation}\label{sigma_assympt}
\sigma_{x}\sim \frac{C_{1}}{\ln x},
\end{equation}where $C_{1}$ is a positive absolute constant (namely, $C_{1}=e^{-\gamma}$, where $\gamma= 0.577215\ldots$ is Euler's constant).

 For any integer $b$, we define
\begin{align*}
S_{x}(b)&:= \{n\in \mathbb{Z}:\ n-b \not \equiv 0\text{ (mod $p$) for any $p\leq x$}\},\\
S_{z,x}(b)&:= \{n\in \mathbb{Z}:\ n-b \not \equiv 0\text{ (mod $p$) for any $z<p\leq x$}\}.
\end{align*} We denote $S_x:= S_{x}(0)$. It is clear that $S_{x}(b)= S_{x}+ b$. Here $S_{x}+b:= \{s+b:\ s\in S_{x}\}$.

Fix a real number $\xi>1$ (which we will eventually take very close to $1$) and define
\[
\mathfrak{H}:=\bigg\{H\in\{1, \xi, \xi^{2},\ldots\}:\ \frac{2y}{x}\leq H\leq \frac{y}{\xi z}\bigg\}.
\]It is clear that
\[
\bigsqcup_{H\in\mathfrak{H}}\Big(\frac{y}{\xi H}, \frac{y}{H}\Big]\subset \Big(z, \frac{x}{2}\Big].
\]By \eqref{Def_y_z} for $H\in \mathfrak{H}$ we have
\begin{equation}\label{H_range}
2(\ln x)^{\delta}\leq H\leq \frac{(\ln x)^{1/2}}{(\ln \ln x)^{2}}.
\end{equation}For each $h\in \mathfrak{H}$, let $\mathcal{Q}_{H}$ be the set of primes $q\in (y/(\xi H), y/H]$. By the Prime Number Theorem we have
\begin{equation}\label{QH_assympt}
\# \mathcal{Q}_{H}\sim (1- 1/\xi)\frac{y}{H\ln x}.
\end{equation}Let
\[
\mathcal{Q}=\bigcup_{H\in \mathfrak{H}}\mathcal{Q}_{H}.
\]For $q\in \mathcal{Q}$, let $H_{q}$ be the unique element of $\mathfrak{H}$ such that
\[
\frac{y}{\xi H_{q}}< q\leq \frac{y}{H_{q}}.
\]We define
\begin{align*}
\mathfrak{H}_{1}=\bigg\{H:\ H=\xi^{j},\ j\geq 0,\ \text{$j$ is even}, \frac{2y}{x}\leq H\leq \frac{y}{\xi z}\bigg\},\\
\mathfrak{H}_{2}=\bigg\{H:\ H=\xi^{j},\ j\geq 0,\ \text{$j$ is odd}, \frac{2y}{x}\leq H\leq \frac{y}{\xi z}\bigg\}.
\end{align*}Then
\[
\mathfrak{H}_{1}\sqcup \mathfrak{H}_{2}=\mathfrak{H}.
\]We put
\[
\mathcal{U}=\bigcup_{H\in \mathfrak{H}_{1}}\mathcal{Q}_{H},\quad\qquad \mathcal{V}=\bigcup_{H\in \mathfrak{H}_{2}}\mathcal{Q}_{H}.
\]We have
\[
\mathcal{U}\sqcup \mathcal{V} = \mathcal{Q}.
\]

Fix a real number $M$ satisfying
\[
M>6.
\]We denote by $\mathbf{b}$ a random residue class from $\mathbb{Z}/P\mathbb{Z}$, chosen with uniform probability, where we adopt the abbreviations
\[
P=P_{z},\quad \sigma=\sigma_{z},\quad \mathbf{S}= S_{z}+\mathbf{b}
\]as well as the projections
\[
P_{1}=P_{H^{M}},\quad \sigma_{1}=\sigma_{H^{M}},\quad \mathbf{b_{1}}\equiv \mathbf{b}\ \text{(mod $P_{1}$)},\quad
\mathbf{S_{1}}= S_{H^{M}}+ \mathbf{b_{1}}
\]and
\[
P_{2}=P_{H^{M}, z},\quad \sigma_{2}=\sigma_{H^{M}, z},\quad \mathbf{b_{2}}\equiv \mathbf{b}\ \text{(mod $P_{2}$)},\quad
\mathbf{S_{2}}= S_{H^{M}, z}+ \mathbf{b_{2}}
\]with the convention that $\mathbf{b_1}\in \mathbb{Z}/P_{1}\mathbb{Z}$ and $\mathbf{b_2}\in \mathbb{Z}/P_{2}\mathbb{Z}$. Thus, $\mathbf{b_1}$ and $\mathbf{b_{2}}$ are each uniformly distributed, are independent of each other, and likewise $\mathbf{S_1}$ and $\mathbf{S_{2}}$ are independent. We also have the obvious relations
\[
P=P_1 P_2,\quad \sigma=\sigma_1\sigma_2,\quad \mathbf{S}=\mathbf{S_1}\cap \mathbf{S_2}.
\]

For prime $q$ and $n\in \mathbb{Z}$, define the random set
\[
\mathbf{AP}(J; q, n):=\{n+qh:\ 1\leq h \leq J\}\cap \mathbf{S_{1}}.
\]Let $K\geq 2$ be a fixed integer parameter, which we will eventually take to be very large. We put
\begin{equation}\label{Def_lambda}
\bm{\lambda}(H; q, n):=\begin{cases}
\sigma_{2}^{-\# \mathbf{AP}(KH; q, n)}    &\text{if $\mathbf{AP}(KH; q, n)\subset \mathbf{S_{2}}$;}\\
0  &\text{otherwise}.
                       \end{cases}
\end{equation}In other words,
\[
\bm{\lambda}(H; q, n)=\frac{1_{\mathbf{AP}(KH; q, n)\subset \mathbf{S_{2}}}}{\sigma_{2}^{\# \mathbf{AP}(KH; q, n)}}.
\] We will focus attention on those $n$ satisfying
\[
-Ky< n\leq y,
\]for outside this interval, if $q\in \mathcal{Q}_{H}$ then $\mathbf{AP}(KH; q, n)$ does not intersect the interval $[1, y]$ of primary interest.

Let $N$ be a positive integer. We put
\[
\mathbf{S^{*}}= S_{z}+ (-N-\mathbf{b}),\quad \mathbf{S_{1}^{*}}=S_{H^{M}}+ (-N-\mathbf{b_{1}}),\quad
\mathbf{S_{2}^{*}}=S_{H^{M}, z}+ (-N-\mathbf{b_{2}}).
\]It is clear that
\[
\mathbf{S^{*}}=\mathbf{S_{1}^{*}}\cap \mathbf{S_{2}^{*}}.
\]We define
\[
\mathbf{{AP}^{*}}(J; q, n):=\{n-qh:\ 1\leq h \leq J\}\cap \mathbf{S_{1}^{*}}
\]and
\begin{equation}\label{Def_lambda_star}
\bm{{\lambda}^{*}}(H; q, n):=\begin{cases}
\sigma_{2}^{-\# \mathbf{{AP}^{*}}(KH; q, n)}    &\text{if $\mathbf{{AP}^{*}}(KH; q, n)\subset \mathbf{S_{2}^{*}}$;}\\
0  &\text{otherwise}.
                       \end{cases}
\end{equation}In other words,
\[
\bm{{\lambda}^{*}}(H; q, n)=\frac{1_{\mathbf{{AP}^{*}}(KH; q, n)\subset \mathbf{S_{2}^{*}}}}{\sigma_{2}^{\# \mathbf{{AP}^{*}}(KH; q, n)}}.
\]We will focus attention on those $n$ satisfying
\[
-y\leq n< Ky,
\]for outside this interval, if $q\in \mathcal{Q}_{H}$ then $\mathbf{{AP}^{*}}(KH; q, n)$ does not intersect the interval $[-y, -1]$ of primary interest.

\begin{lemma}\label{L1}
Assume that $M\geq 2$. Then

\textup{(i)} One has
\begin{align}
\mathbb{E} \#(\mathbf{S}\cap [1,y])&=\sigma y,\label{L1:E_S}\\
\mathbb{E} \big(\#(\mathbf{S}\cap [1,y])\big)^{2}&= \bigg(1+ O\bigg(\frac{1}{\ln y}\bigg)\bigg)(\sigma y)^{2},\label{L1:D_S}\\
\mathbb{E} \#(\mathbf{S^{*}}\cap [-y,-1])&=\sigma y,\label{L1:E_S_star}\\
\mathbb{E} \big(\#(\mathbf{S^{*}}\cap [-y,-1])\big)^{2}&= \bigg(1+ O\bigg(\frac{1}{\ln y}\bigg)\bigg)(\sigma y)^{2}.\label{L1:D_S_star}
\end{align}

\textup{(ii)} For every $H\in \mathfrak{H}_{1}$, and for $j\in \{1, 2\}$ we have
\begin{equation}\label{L1:lambda}
\mathbb{E}\sum_{q\in \mathcal{Q}_{H}} \left(\sum_{-Ky< n\leq y}\bm{\lambda}(H; q, n)\right)^{j}= \left(1 +
 O\left(\frac{1}{H^{M-2}}\right)\right) \big((K+1)y\big)^{j}\# \mathcal{Q}_{H}.
\end{equation}

For every $H\in \mathfrak{H}_{2}$, and for $j\in \{1, 2\}$ we have
\begin{equation}\label{L1:lambda_star}
\mathbb{E}\sum_{q\in \mathcal{Q}_{H}} \left(\sum_{-y\leq n< Ky}\bm{{\lambda}^{*}}(H; q, n)\right)^{j}= \left(1 +
 O\left(\frac{1}{H^{M-2}}\right)\right) \big((K+1)y\big)^{j}\# \mathcal{Q}_{H}.
\end{equation}

\textup{(iii)} For every $H\in \mathfrak{H}_{1}$, and for $j\in \{1,2\}$ we have
\begin{equation}\label{L1:lamda_AP}
\mathbb{E}\sum_{n\in \mathbf{S}\cap [1, y]} \left(\sum_{q\in \mathcal{Q}_{H}} \sum_{h\leq K H}\bm{\lambda}(H; q, n-qh)\right)^{j} = \left(1 + O\left(\frac{1}{H^{M-2}}\right)\right) \left(\frac{\#\mathcal{Q}_{H}KH}{\sigma_{2}}\right)^{j}\sigma y.
\end{equation}

For every $H\in \mathfrak{H}_{2}$, and for $j\in \{1,2\}$ we have
\begin{equation}\label{L1:lambda_AP_star}
\mathbb{E}\sum_{n\in \mathbf{S^{*}}\cap [-y, -1]} \left(\sum_{q\in \mathcal{Q}_{H}} \sum_{h\leq K H}\bm{{\lambda}^{*}}(H; q, n+qh)\right)^{j} = \left(1 + O\left(\frac{1}{H^{M-2}}\right)\right) \left(\frac{\#\mathcal{Q}_{H}KH}{\sigma_{2}}\right)^{j}\sigma y.
\end{equation}
\end{lemma}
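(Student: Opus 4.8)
The plan is to compute each expectation by conditioning on the relevant slice of the random shift $\mathbf{b}$ and exploiting the independence of $\mathbf{b_1}$ and $\mathbf{b_2}$, reducing everything to counting tuples of integers that survive the sieve $S_z$ together with error terms coming from the Chinese Remainder Theorem. For part (i), $\#(\mathbf{S}\cap[1,y]) = \sum_{1\le n\le y} 1_{n-\mathbf{b}\in S_z}$, and since $\mathbf{b}$ is uniform mod $P=P_z$, each summand has expectation $\prod_{p\le z}(1-1/p) = \sigma$, giving \eqref{L1:E_S} exactly; the second moment is a double sum over pairs $(n_1,n_2)$, where the diagonal contributes $\sigma y$ and the off-diagonal pairs contribute $\sigma^2$ unless $p\mid n_1-n_2$ for some $p\le z$, and the usual Mertens/sieve bookkeeping on the number of such "bad" pairs produces the $(1+O(1/\ln y))(\sigma y)^2$ shape. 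The starred versions are identical after the substitution $n\mapsto -N-n$, which is a bijection preserving the sieve conditions, so nothing new is needed.

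For parts (ii) and (iii) I would condition on $\mathbf{S_1}$ (equivalently on $\mathbf{b_1}$) and first take the expectation over $\mathbf{S_2}$ (equivalently $\mathbf{b_2}$), which is legitimate by independence. The key identity is that for a fixed finite set $A\subset\mathbb{Z}$ with no two elements congruent mod any prime in $(H^M,z]$ — which will hold here because the points of an AP with common difference $q>y/(\xi H)$ and length $\le KH$ have pairwise differences that are products of $q$ with integers of size $\le KH < z$, hence coprime to all primes in that range once $q$ exceeds them — one has
\[
\mathbb{E}\Big(\sigma_2^{-\#(A\cap \mathbf{S_2})}1_{A\subset\mathbf{S_2}}\Big) = \prod_{a\in A}\big(\sigma_2^{-1}\cdot \sigma_2\big)=1,
\]
because $\mathbb{P}(A\subset\mathbf{S_2}) = \sigma_2^{\#A}$ exactly when the residues are distinct, while $\sigma_2^{-\#A}$ cancels it. Thus $\mathbb{E}_{\mathbf{S_2}}\,\bm{\lambda}(H;q,n) = 1$ whenever $\mathbf{AP}(KH;q,n)$ has the distinctness property, and similarly $\mathbb{E}_{\mathbf{S_2}}\,\bm{\lambda}(H;q,n)^2 = \sigma_2^{-\#\mathbf{AP}(KH;q,n)}$. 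After this step, \eqref{L1:lambda} for $j=1$ becomes $\mathbb{E}\sum_{q}\sum_{n} 1_{\ldots} $, which counts, for each $q\in\mathcal{Q}_H$, the pairs $(n,h)$ with $1\le h\le KH$, $-Ky<n\le y$, and $n+qh\in\mathbf{S_1}$; reindexing by $m=n+qh$ this is $\sum_q \#\{(m,h): 1\le h\le KH,\ m\in\mathbf{S_1}\cap(\text{an interval of length }(K+1)y)\}$... taking expectations over $\mathbf{S_1}$ gives $(K+1)y\,\sigma_1$ per $(q,h)$ — wait, more carefully: $\bm\lambda$ sums to $(K+1)y$ in expectation over the whole interval because $\sum_{-Ky<n\le y}1 = (K+1)y$ and each surviving configuration contributes $1$; the $O(1/H^{M-2})$ error is exactly the contribution of those rare $(q,n)$ for which some prime $p\le H^M$ divides two elements of the AP, whose count is governed by $\sum_{p\le H^M}(KH)^2/p \ll (KH)^2\ln H^M/H^{?}$ relative to the main term $(K+1)y\#\mathcal{Q}_H$ — and since $\#\mathcal{Q}_H\gg y/(H\ln x)$ and $y$ is a power of $x$, this ratio is $O(H^{-(M-2)})$ after using $H\le(\ln x)^{1/2}$. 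The second moments ($j=2$) and the AP-weighted sums in (iii) are handled by the same two-step conditioning, with the extra factor $\sigma_2^{-\#\mathbf{AP}}$ in (iii) producing the $1/\sigma_2$ (resp. $1/\sigma_2^j$) in the main term and the cross terms between two different $q$'s or two different AP-points generating the error.

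The main obstacle is the bookkeeping of the error terms: one must show that all the "collision" events — two points of a single AP lying in the same residue class mod some $p\le H^M$, or two APs (from different $q,q'$ or different base points) overlapping, or an AP point coinciding with the chosen $n\in\mathbf{S}$ in part (iii) — contribute a relative error of size $O(H^{-(M-2)})$ and no worse. This forces a careful choice of ranges: the exponent $M$ must be large enough that $\sum_{p\le H^M} 1/p \asymp \ln\ln H^M$-type losses are dominated, the common differences $q\asymp y/H$ must be large enough to guarantee AP-point distinctness modulo all primes up to $z$ (this is why the construction splits the sieve at $H^M$ and at $z$), and the interval $(-Ky,y]$ must be exactly the right length so that the boundary APs are the only ones missing $[1,y]$. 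I would organize the proof as: (a) the exact CRT computation of $\mathbb{E}_{\mathbf{S_2}}$ of $\bm\lambda$ and $\bm\lambda^2$; (b) the exact $\mathbb{E}_{\mathbf{S_1}}$ computation of the resulting indicator sums, isolating the diagonal; (c) a uniform bound on the off-diagonal / collision contribution in terms of $\sum_{p\le H^M}1/p$ and the sizes of the summation ranges; (d) assembling (a)–(c) and invoking \eqref{QH_assympt} together with \eqref{H_range} to convert the absolute error into the stated relative error $O(1/H^{M-2})$. The starred statements follow verbatim under $n\mapsto -N-n$.
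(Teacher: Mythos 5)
The paper disposes of this lemma by citing \cite[Theorem 3]{FKMPT} for the unstarred identities and noting that the starred ones follow ``similarly'' (essentially by the reflection $n\mapsto -N-n$ together with shift--invariance of $\mathbf{b}$). You instead attempt to reconstruct a direct proof, which is a legitimate and potentially more instructive route, and your overall plan --- condition on $\mathbf{b_1}$, average over $\mathbf{b_2}$ to exploit the normalizing weight $\sigma_2^{-\#\mathbf{AP}}$, then average over $\mathbf{b_1}$ and isolate diagonal and collision contributions --- is the natural one. However, the proof as written contains a concrete error at the step you label the ``key identity.''

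You assert that if $A$ has pairwise distinct residues modulo every prime $p\in(H^M,z]$, then
$\mathbb{P}(A\subset\mathbf{S_2})=\sigma_2^{\#A}$ \emph{exactly}, so that
$\mathbb{E}_{\mathbf{b_2}}\bigl(\sigma_2^{-\#A}1_{A\subset\mathbf{S_2}}\bigr)=1$.
This is false: the events $\{a\in\mathbf{S_2}\}$ for distinct $a\in A$ are not independent. For a fixed $p\in(H^M,z]$ the single uniform residue $\mathbf{b_2}\bmod p$ must avoid all $\#A$ classes $\{a\bmod p: a\in A\}$, so
\[
\mathbb{P}(A\subset\mathbf{S_2})=\prod_{H^M<p\le z}\Bigl(1-\tfrac{\#A}{p}\Bigr),
\]
not $\prod_{H^M<p\le z}(1-1/p)^{\#A}=\sigma_2^{\#A}$. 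The ratio of the two is $\prod_{H^M<p\le z}\bigl(1+O(\#A^2/p^2)\bigr)=1+O\bigl(\#A^2/(H^M\log H^M)\bigr)$, and with $\#A\le KH$ this is $1+O\bigl(H^{2-M}/\log H\bigr)$. In other words, the cancellation you rely on is only \emph{approximate}, and its defect is itself of the order of the claimed error $O(H^{-(M-2)})$. Consequently your later sentence attributing the $O(1/H^{M-2})$ error ``exactly'' to collisions modulo small primes $p\le H^M$ in the $\mathbf{S_1}$ part misidentifies the source: a comparable contribution already comes from the $\mathbf{S_2}$ average, and must be tracked. The structure of your argument can be salvaged by replacing the claimed identity with the correct asymptotic $\mathbb{E}_{\mathbf{b_2}}\bm{\lambda}(H;q,n)=\prod_{H^M<p\le z}\frac{1-\#A/p}{(1-1/p)^{\#A}}=1+O(H^{2-M})$ (and its square analogue), and then carrying both this error and the $\mathbf{S_1}$-collision error through the bookkeeping; as written, though, the proof has a genuine gap. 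A smaller imprecision of the same flavor appears in your sketch of \eqref{L1:D_S}: off-diagonal pairs with $p\nmid n_1-n_2$ for all $p\le z$ contribute $\prod_{p\le z}(1-2/p)$, not $\sigma^2$ (indeed the factor at $p=2$ vanishes unless $n_1\equiv n_2 \pmod 2$), so the $(1+O(1/\ln y))(\sigma y)^2$ asymptotic requires the standard but nontrivial bookkeeping across residue classes, not just a dichotomy on whether some prime divides $n_1-n_2$.
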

\begin{proof} The statements \eqref{L1:E_S}, \eqref{L1:D_S}, \eqref{L1:lambda}, and \eqref{L1:lamda_AP} follow from \cite[Theorem 3]{FKMPT}. The statements \eqref{L1:E_S_star}, \eqref{L1:D_S_star}, \eqref{L1:lambda_star}, and \eqref{L1:lambda_AP_star} can be proved similarly.
\end{proof}

\begin{lemma}\label{L2}
Let $\delta$ be a real number satisfying $0<\delta< 1/2$, \eqref{Basic_delta} and \eqref{Basic_Delta_2}. Then there exist $M>6$, $K>1$, $\xi>1$, $\varepsilon>0$ such that for any large $x$ (with respect to $M$, $\varepsilon$, $K$, $\xi$) there exist an integer $b$ and sets $\mathcal{U'}\subset \mathcal{U}$ and $\mathcal{V'}\subset \mathcal{V}$ such that the following statements hold.

\textup{(i)} One has
\begin{align}
\# (S\cap [1, y])&\leq 2\sigma y,\label{L2:i_S}\\
\# (S^{*}\cap [-y, -1])&\leq 2\sigma y.\label{L2:i_S_star}
\end{align}

\textup{(ii)} For all $q\in \mathcal{U'}$, one has
\begin{equation}\label{L2:lambda}
\sum_{-Ky< n \leq y} \lambda (H_{q}; q, n) = \left(1+O\left(\frac{1}{(\ln x)^{\delta (1+\varepsilon)}}\right)\right)(K+1)y.
\end{equation}

\textup{(iii)} For all $q\in \mathcal{V'}$, one has
\begin{equation}\label{L2:lambda_star}
\sum_{-y\leq n < Ky} {\lambda}^{*} (H_{q}; q, n) = \left(1+O\left(\frac{1}{(\ln x)^{\delta (1+\varepsilon)}}\right)\right)(K+1)y.
\end{equation}

\textup{(iv)} There is a set $\mathcal{N}\subset (S\cap [1, y])$ such that
\[
\# \big((S\cap [1, y]) \setminus \mathcal{N}\big) \leq \frac{x}{10\ln x}
\]and for any $n\in \mathcal{N}$, one has
\begin{equation}\label{L2:lambda_AP}
\sum_{q\in \mathcal{U'}} \sum_{h\leq KH_{q}}\lambda (H_{q}; q, n-qh)=\left(C_2 + O\left(\frac{1}{(\ln x)^{\delta (1+\varepsilon)}}\right)\right) (K+1)y
\end{equation}for some quantity $C_2$ independent of $n$ with
\[
10^{2\delta}\leq C_{2}\leq 100.
\]

\textup{(v)} There is a set $\mathcal{N^{*}}\subset (S^{*}\cap [-y, -1])$ such that
\[
\# \big((S^{*}\cap [-y, -1]) \setminus \mathcal{N^{*}}\big) \leq \frac{x}{10\ln x}
\]and for any $n\in \mathcal{N^{*}}$, one has
\begin{equation}\label{L2:N_star_AP}
\sum_{q\in \mathcal{V'}} \sum_{h\leq KH_{q}}{\lambda}^{*} (H_{q}; q, n+qh)=\left(C_3 + O\left(\frac{1}{(\ln x)^{\delta (1+\varepsilon)}}\right)\right) (K+1)y
\end{equation}for some quantity $C_3$ independent of $n$ with
\[
10^{2\delta}\leq C_{3}\leq 100.
\]
\end{lemma}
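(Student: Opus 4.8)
The idea is to deduce Lemma \ref{L2} from Lemma \ref{L1} by a first-moment/second-moment (Chebyshev) argument followed by a union bound, exploiting that we have control of both the means and the variances of all the relevant random sums. First I would fix the parameters: choose $M > 6$, then $K$ large, then $\xi > 1$ close to $1$, and finally $\varepsilon > 0$ small, in that order of dependence; the admissibility condition \eqref{Basic_delta} together with \eqref{Basic_Delta_2} is what guarantees that these choices can be made so that the constants $C_2, C_3$ in (iv), (v) land in the window $[10^{2\delta}, 100]$ (this is exactly where the inequality $12\cdot 10^{2\delta} < \ln(1/(2\delta))$ is consumed — it leaves enough slack in the summation over $H \in \mathfrak{H}_1$, resp.\ $\mathfrak{H}_2$, for the geometric-type series in $H$ to converge to something bounded below by $10^{2\delta}$ and above by $100$). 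The quantities $C_2$ and $C_3$ will arise as (limits of) the sums $\sum_{H} \#\mathcal{Q}_H \cdot KH/(\sigma_2 \sigma y) \cdot (\text{something})$, i.e.\ essentially $\sum_{H \in \mathfrak{H}_1} (1-1/\xi)$ type terms coming from \eqref{QH_assympt}, which one computes to be $\asymp (\ln x)^{\delta}$-free after the normalization, giving the stated numerical range.

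**Key steps.** (1) For part (i): by \eqref{L1:E_S} and \eqref{L1:D_S}, $\mathbf{X} := \#(\mathbf{S}\cap[1,y])$ has mean $\sigma y$ and second moment $(1+O(1/\ln y))(\sigma y)^2$, so $\mathbb{E}(\mathbf{X}-\sigma y)^2 = O((\sigma y)^2/\ln y)$, and Chebyshev gives $\mathbb{P}(\mathbf{X} > 2\sigma y) = O(1/\ln y) = o(1)$; similarly for $\mathbf{S}^*$. (2) For parts (ii), (iii): apply \eqref{L1:lambda} and \eqref{L1:lambda_star} with $j=1$ and $j=2$. Writing $\mathbf{Y}_q = \sum_{-Ky < n \le y}\bm\lambda(H_q;q,n)$, the $j=1$ and $j=2$ relations say $\mathbb{E}\sum_{q\in\mathcal{Q}_H}\mathbf{Y}_q = (1+O(H^{-(M-2)}))((K+1)y)\#\mathcal{Q}_H$ and $\mathbb{E}\sum_{q}\mathbf{Y}_q^2 = (1+O(H^{-(M-2)}))((K+1)y)^2\#\mathcal{Q}_H$, so the total variance $\mathbb{E}\sum_q (\mathbf{Y}_q - (K+1)y)^2 = O(H^{-(M-2)})((K+1)y)^2 \#\mathcal{Q}_H$; hence by Markov the number of $q \in \mathcal{Q}_H$ with $|\mathbf{Y}_q - (K+1)y| > (K+1)y/(\ln x)^{\delta(1+\varepsilon)}$ is, in expectation, $O(H^{-(M-2)}(\ln x)^{2\delta(1+\varepsilon)}\#\mathcal{Q}_H)$, which by \eqref{H_range} (so $H \ge 2(\ln x)^\delta$) is $o(\#\mathcal{Q}_H)$ provided $(M-2)\delta > 2\delta(1+\varepsilon)$, i.e.\ $M > 4 + 2\varepsilon$ — consistent with $M > 6$. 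Summing over $H \in \mathfrak{H}_1$ and defining $\mathcal{U}'$ to be the $q$'s surviving this cut (and symmetrically $\mathcal{V}' \subset \mathcal{V}$ using \eqref{L1:lambda_star}) makes (ii), (iii) hold on an expected $1-o(1)$ fraction. (3) For parts (iv), (v): apply \eqref{L1:lamda_AP}/\eqref{L1:lambda_AP_star} with $j=1,2$ to get that $\mathbf{Z}_n := \sum_{q\in\mathcal{Q}_H}\sum_{h\le KH}\bm\lambda(H;q,n-qh)$, summed over $n\in\mathbf{S}\cap[1,y]$, has first and second moments matching $(\#\mathcal{Q}_H KH/\sigma_2)^j \sigma y$; then sum over $H\in\mathfrak{H}_1$, apply Chebyshev/Markov again, and let $\mathcal{N}$ be the set of $n$ for which $\sum_{q\in\mathcal{U}'}\sum_{h\le KH_q}\lambda(H_q;q,n-qh)$ is within the stated relative error of its mean $C_2(K+1)y$; the exceptional set has size $o(\sigma y) = o(x/\ln x)$, which beats $x/(10\ln x)$ for large $x$. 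One subtlety here: after restricting $q$ to $\mathcal{U}' \subsetneq \mathcal{U}$ we must check that discarding $\mathcal{U}\setminus\mathcal{U}'$ changes these inner sums by a negligible amount for all but $o(\sigma y)$ values of $n$ — this follows by a further first-moment estimate, bounding $\mathbb{E}\sum_n \sum_{q\in\mathcal{U}\setminus\mathcal{U}'}\sum_h \bm\lambda$ using \eqref{L1:lamda_AP} with $j=1$ restricted to the small set of bad $q$'s. (4) Finally, intersect all the $o(1)$-probability bad events from (1)–(3): since there are only finitely many of them (a bounded number of $H$'s, namely $O(\log_\xi((\ln x)^{1/2}))$ of them — wait, that grows, so one must be slightly careful and absorb the $\log\log x$ loss, which is fine since each individual failure probability is a negative power of $\ln x$), a union bound shows the good event has positive probability, so a valid deterministic choice of $b$ (hence $S = S_z + b$, $\mathcal{U}'$, $\mathcal{V}'$, $\mathcal{N}$, $\mathcal{N}^*$) exists.

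**Main obstacle.** The routine part is the second-moment machinery; the real work is bookkeeping the parameter hierarchy so that (a) the error terms $O(H^{-(M-2)})$ summed against the number of $H$'s (which is $\asymp \log\log x$) and against the Chebyshev loss $(\ln x)^{2\delta(1+\varepsilon)}$ still produce $o(1)$ total failure probability — this forces $M > 6$ with room to spare and pins down how large $\varepsilon$ may be relative to $\delta$ and $M$; and (b) the constants $C_2, C_3$ genuinely lie in $[10^{2\delta}, 100]$. Point (b) is the crux: $C_2$ is morally $\sum_{H \in \mathfrak{H}_1} (1 - 1/\xi)(\text{normalization})$, and to get the \emph{lower} bound $C_2 \ge 10^{2\delta}$ one needs the range of $H$ (from $\asymp (\ln x)^\delta$ up to $\asymp (\ln x)^{1/2}/(\log\log x)^2$) to contribute a factor comparable to $(\ln x)^{\delta}/(\ln x)^{\delta}$... more precisely the sum telescopes/integrates to something like $\delta \ln\ln x \cdot (1-1/\xi) / \log_\xi(\cdot)$, and it is precisely the hypothesis $12 \cdot 10^{2\delta} < \ln(1/(2\delta))$ that certifies this quantity can be tuned (via $\xi \to 1$, $K \to \infty$) into the interval $[10^{2\delta}, 100]$; verifying this numerical inequality does the required bound is where I expect to spend the most care. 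The other genuine (though standard) subtlety is ensuring that passing from $\mathcal{U}$ to $\mathcal{U}'$ in (iv) (resp.\ $\mathcal{V} \to \mathcal{V}'$ in (v)) does not spoil the main term of the double sum over $q$ — handled by the auxiliary first-moment bound on the discarded $q$'s as in step (3) above.
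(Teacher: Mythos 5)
Your proposal matches the paper's proof essentially step for step: Chebyshev from Lemma \ref{L1}(i) for part (i); Markov bounds derived from the variance statements in Lemma \ref{L1}(ii),(iii) to define the non-exceptional sets $\mathcal{U}'$, $\mathcal{V}'$, $\mathcal{N}$, $\mathcal{N^*}$ (including the auxiliary first-moment estimate controlling the contribution of the discarded $q\in\mathcal{Q}_H\setminus\mathcal{U}'_H$, which is exactly what the paper does with $\bm{\mathcal{E'}}_H$ and $\bm{\mathcal{R'}}_H$); a union bound over the $O(\log\log x)$ values of $H$; and finally tuning $M\to 6^+$, $K\to\infty$, $\xi\to 1^+$ so that $C_2, C_3 \to \frac{1}{12}\ln(1/(2\delta))$, which \eqref{Basic_delta} and \eqref{Basic_Delta_2} place in $[10^{2\delta},100]$. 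Two small notes: the paper uses the $H$-dependent Markov threshold $y/H^{1+\varepsilon}$ rather than a fixed one, so the per-$H$ failure probability decays in $H$ and the union bound over $\mathfrak{H}_1$ is cleaner; and your intermediate heuristic for the sum over $H$ is garbled (it is $\sum_{H\in\mathfrak{H}_1}1/\ln H \sim \ln(1/(2\delta))/(2\ln\xi)$, a constant in $x$, not $\asymp\delta\ln\ln x$), though you reach the correct role of the hypothesis at the end.
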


\begin{proof}[Proof of Lemma \ref{L2}.] We draw $\mathbf{b}$ uniformly at random from $\mathbb{Z}/P\mathbb{Z}$. It will suffice to generate random sets $\bm{\mathcal{U'}}$ and $\bm{\mathcal{V'}}$ such that the random functions $\bm{\lambda}$ and $\bm{{\lambda}^{*}}$ defined in \eqref{Def_lambda} and \eqref{Def_lambda_star} satisfy the conclusions of Lemma \ref{L2} (with $b$ replaced by $\mathbf{b}$) hold with positive probability - in fact, we will show that they hold with probability $1-o(1)$.

From Lemma \ref{L1}\textup{(i)} we have
\begin{align*}
\mathbb{E}\left(\#(\mathbf{S}\cap [1, y]) - \sigma y\right)^{2}&\ll \frac{(\sigma y)^{2}}{\ln y},\\
 \mathbb{E}\left(\#(\mathbf{S}^{*}\cap [-y, -1]) - \sigma y\right)^{2}&\ll \frac{(\sigma y)^{2}}{\ln y}.
\end{align*}Hence by Chebyshev's inequality, we see that
\begin{align}
\mathbb{P}\left( \#(\mathbf{S}\cap [1, y])\leq 2\sigma y\right)= 1 - O\left(\frac{1}{\ln x}\right),\label{L2:S_est}\\
\mathbb{P}\left( \#(\mathbf{S}^{*}\cap [-y, -1])\leq 2\sigma y\right)= 1 - O\left(\frac{1}{\ln x}\right),\label{L2:S_star_est}
\end{align}verifying \eqref{L2:i_S} and \eqref{L2:i_S_star} in Lemma \ref{L2}.

Let $H\in \mathfrak{H}_{1}$. From \eqref{L1:lambda} we have
\begin{align}\label{L2:Basic_Moment_H1}
\mathbb{E}\sum_{q\in \mathcal{Q}_{H}}\left(\sum_{-Ky< n\leq y}\bm{\lambda}(H;q,n)- (K+1)y\right)^{2}&\ll \frac{y^{2}\#\mathcal{Q}_{H}}{H^{M-2}}.
\end{align}We put
\[
\bm{\mathcal{U'}}_{H}=\left\{q\in \mathcal{Q}_{H}: \Big|\sum_{-Ky<n\leq y}\bm{\lambda}(H;q,n)-(K+1)y\Big|\leq \frac{y}{H^{1+\varepsilon}}\right\}.
\]We have
\begin{align}
\mathbb{E}\# (\mathcal{Q}_{H}\setminus \bm{\mathcal{U'}}_{H})&=\mathbb{E}\sum_{q\in \mathcal{Q}_{H}\setminus \bm{\mathcal{U'}}_{H}} 1=\notag\\
&=\mathbb{E}\sum_{q\in \mathcal{Q}_{H}\setminus \bm{\mathcal{U'}}_{H}} \frac{\Big(\sum_{-Ky<n\leq y}\bm{\lambda}(H;q,n)-(K+1)y\Big)^{2}}{\Big(\sum_{-Ky<n\leq y}\bm{\lambda}(H;q,n)-(K+1)y\Big)^{2}}\leq\notag\\
&\leq \frac{H^{2+2\varepsilon}}{y^{2}}\mathbb{E}\sum_{q\in \mathcal{Q}_{H}\setminus \bm{\mathcal{U'}}_{H}}\Big(\sum_{-Ky<n\leq y}\bm{\lambda}(H;q,n)-(K+1)y\Big)^{2}\leq\notag\\
&\leq \frac{H^{2+2\varepsilon}}{y^{2}}\mathbb{E}\sum_{q\in \mathcal{Q}_{H}}\Big(\sum_{-Ky<n\leq y}\bm{\lambda}(H;q,n)-(K+1)y\Big)^{2}\ll \frac{\#\mathcal{Q}_{H}}{H^{M-4-2\varepsilon}}.\label{L2:EXPECTATION_1}
\end{align}By Markov's inequality, we have
\[
\mathbb{P}\bigg(\# (\mathcal{Q}_{H}\setminus \bm{\mathcal{U'}}_{H})\leq \frac{\# \mathcal{Q}_{H}}{H^{M-4-3\varepsilon}}\bigg)=1- O(H^{-\varepsilon}).
\]We observe that for any $\alpha>0$ we have
\begin{equation}\label{L2:H_alpha}
\sum_{H\in \mathfrak{H}} H^{-\alpha}\leq \frac{1}{\big(2(\ln x)^{\delta}\big)^{\alpha}}+
\frac{1}{\big(2(\ln x)^{\delta}\xi\big)^{\alpha}}+\frac{1}{\big(2(\ln x)^{\delta}\xi^{2}\big)^{\alpha}}+\ldots\ll_{\alpha} (\ln x)^{-\delta \alpha}.
\end{equation}Hence, with probability $1-O((\ln x)^{-\delta\varepsilon})$ the relation
\begin{equation}\label{L2:U_strich_est}
\# (\mathcal{Q}_{H}\setminus \bm{\mathcal{U'}}_{H})\leq \frac{\# \mathcal{Q}_{H}}{H^{M-4-3\varepsilon}}
\end{equation}holds for every $H\in\mathfrak{H}_{1}$ simultaneously. We put
\[
\bm{\mathcal{U'}}=\bigcup_{H\in \mathfrak{H}_{1}} \bm{\mathcal{U'}}_{H}.
\]Since $H\geq 2(\ln x)^{\delta}$ for any $H\in \mathfrak{H}_{1}$, we have for any $q\in \bm{\mathcal{U'}}$
\begin{equation}\label{L2:U_st_prop}
\sum_{-Ky< n \leq y} \bm{\lambda} (H_{q}; q, n) = \left(1+O\left(\frac{1}{(\ln x)^{\delta (1+\varepsilon)}}\right)\right)(K+1)y.
\end{equation}

Let $H\in \mathfrak{H}_{2}$. From \eqref{L1:lambda_star} we have
\begin{align*}
\mathbb{E}\sum_{q\in \mathcal{Q}_{H}}\left(\sum_{-y\leq n< Ky}\bm{{\lambda}^{*}}(H;q,n)- (K+1)y\right)^{2}&\ll \frac{y^{2}\#\mathcal{Q}_{H}}{H^{M-2}}.
\end{align*}We put
\[
\bm{\mathcal{V'}}_{H}=\left\{q\in \mathcal{Q}_{H}: \Big|\sum_{-y\leq n< Ky}\bm{{\lambda}^{*}}(H;q,n)-(K+1)y\Big|\leq \frac{y}{H^{1+\varepsilon}}\right\}.
\]Similarly, we have
\[
\mathbb{E}\# (\mathcal{Q}_{H}\setminus \bm{\mathcal{V'}}_{H})\ll \frac{\#\mathcal{Q}_{H}}{H^{M-4-2\varepsilon}},
\]and with probability $1-O((\ln x)^{-\delta\varepsilon})$ the relation
\begin{equation}\label{L2:V_strich_est}
\# (\mathcal{Q}_{H}\setminus \bm{\mathcal{V'}}_{H})\leq \frac{\# \mathcal{Q}_{H}}{H^{M-4-3\varepsilon}}
\end{equation}holds for every $H\in\mathfrak{H}_{2}$ simultaneously. We put
\[
\bm{\mathcal{V'}}=\bigcup_{H\in \mathfrak{H}_{2}} \bm{\mathcal{V'}}_{H}.
\]For any $q\in \bm{\mathcal{V'}}$ we have
\begin{equation}\label{L2:V_st_prop}
\sum_{-y\leq n < Ky} \bm{{\lambda}^{*}} (H_{q}; q, n) = \left(1+O\left(\frac{1}{(\ln x)^{\delta (1+\varepsilon)}}\right)\right)(K+1)y.
\end{equation}Thus, on the probability $1-o(1)$ event that \eqref{L2:U_strich_est} holds for every $H\in \mathfrak{H}_{1}$, that \eqref{L2:V_strich_est} holds for every $H\in \mathfrak{H}_{2}$ and that \eqref{L2:S_est} and \eqref{L2:S_star_est} hold, items \textup{(i)} \eqref{L2:i_S}, \eqref{L2:i_S_star}, \textup{(ii)} \eqref{L2:lambda} and \textup{(iii)} \eqref{L2:lambda_star} of Lemma \ref{L2} follow upon recalling \eqref{L2:U_st_prop} and \eqref{L2:V_st_prop}.

We work on part \textup{(iv)} of Lemma \ref{L2} using Lemma \ref{L1}\textup{(iii)} in a similar fashion to previous arguments. Let $H\in \mathfrak{H}_{1}$. From \eqref{L1:lamda_AP} we have
\[
\mathbb{E}\sum_{n\in \mathbf{S}\cap [1,y]} \left( \sum_{q\in \mathcal{Q}_{H}}\sum_{h\leq KH}\bm{\lambda}(H;q, n-qh) - \frac{\#\mathcal{Q}_{H} KH}{\sigma_{2}}\right)^{2}\ll \frac{1}{H^{M-2}} \left(\frac{\#\mathcal{Q}_{H}KH}{\sigma_2}\right)^{2}\sigma y.
\]We put
\begin{equation}\label{L2:def_E}
\bm{\mathcal{E}}_{H}=\left\{n\in \mathbf{S}\cap [1,y]: \left|\sum_{q\in \mathcal{Q}_{H}}\sum_{h\leq KH}\bm{\lambda}(H;q, n-qh) - \frac{\#\mathcal{Q}_{H} KH}{\sigma_{2}}\right|\geq \frac{\#\mathcal{Q}_{H} KH}{\sigma_{2} H^{(M-3)/2 - \varepsilon}}\right\}.
\end{equation}We have
\begin{align*}
\mathbb{E} \#\bm{\mathcal{E}}_{H}&=\mathbb{E}\sum_{n\in \bm{\mathcal{E}}_{H}} 1=\\
&=
\mathbb{E}\sum_{n\in \bm{\mathcal{E}}_{H}}\frac{\left(\sum_{q\in \mathcal{Q}_{H}}\sum_{h\leq KH}\bm{\lambda}(H;q, n-qh) - (\#\mathcal{Q}_{H} KH)/\sigma_{2}\right)^{2}}{\left(\sum_{q\in \mathcal{Q}_{H}}\sum_{h\leq KH}\bm{\lambda}(H;q, n-qh) - (\#\mathcal{Q}_{H} KH)/\sigma_{2}\right)^{2}}\leq\\
&\leq \frac{\sigma_{2}^{2}H^{M-3 -2\varepsilon}}{(\#\mathcal{Q}_{H}KH)^{2}}
\mathbb{E}\sum_{n\in \bm{\mathcal{E}}_{H}}\left(\sum_{q\in \mathcal{Q}_{H}}\sum_{h\leq KH}\bm{\lambda}(H;q, n-qh) - \frac{\#\mathcal{Q}_{H} KH}{\sigma_{2}}\right)^{2}\leq\\
&\leq \frac{\sigma_{2}^{2}H^{M-3 -2\varepsilon}}{(\#\mathcal{Q}_{H}KH)^{2}}
\mathbb{E}\sum_{n\in \mathbf{S}\cap [1,y]}\left(\sum_{q\in \mathcal{Q}_{H}}\sum_{h\leq KH}\bm{\lambda}(H;q, n-qh) - \frac{\#\mathcal{Q}_{H} KH}{\sigma_{2}}\right)^{2}\ll \frac{\sigma y}{H^{1+2\varepsilon}}.
\end{align*}By Markov's inequality, we have
\begin{equation}\label{L2:E_H}
\mathbb{P}\Big(\# \bm{\mathcal{E}}_{H}\leq \frac{\sigma y}{H^{1+\varepsilon}}\Big)= 1-O\left(\frac{1}{H^{\varepsilon}}\right).
\end{equation}

We next estimate the contribution from ``bad'' primes $q\in \mathcal{Q}_{H}\setminus \bm{\mathcal{U'}}_{H}$. For any $h\leq KH$, by the Cauchy-Schwarz inequality we have
\begin{align}
\mathbb{E}\sum_{n\in \mathbf{S}\cap [1,y]} \sum_{q\in \mathcal{Q}_{H}\setminus \bm{\mathcal{U'}}_{H}} \bm{\lambda}(H; q, n-qh)\leq &\big(\mathbb{E}\# (\mathcal{Q}_{H}\setminus \bm{\mathcal{U'}}_{H})\big)^{1/2}\cdot\notag\\
&\cdot\left(\mathbb{E} \sum_{q\in \mathcal{Q}_{H}\setminus \bm{\mathcal{U'}}_{H}} \bigg(\sum_{n=1}^{y} \bm{\lambda}(H; q, n-qh)\bigg)^{2}\right)^{1/2}.\label{L2:SOME_EXP_1}
\end{align}Given $q\in \mathcal{Q}_{H}\setminus \bm{\mathcal{U'}}_{H}$, we have
\[
\bigg|\sum_{n=1}^{y} \bm{\lambda}(H; q, n-qh)\bigg|\leq \bigg|\sum_{n=1}^{y} \bm{\lambda}(H; q, n-qh)- (K+1)y\bigg| + (K+1)y.
\] It is clear that
\begin{align*}
\bigg|\sum_{n=1}^{y} \bm{\lambda}(H; q, n&-qh)- (K+1)y\bigg|\leq\\
 &\leq\max\left((K+1)y, \bigg|\sum_{-Ky<n\leq y} \bm{\lambda}(H; q, n)- (K+1)y\bigg|\right)\leq\\
 &\leq\bigg|\sum_{-Ky<n\leq y} \bm{\lambda}(H; q, n)- (K+1)y\bigg|+ (K+1)y.
\end{align*}

Since $(a+b)^{2}\leq 2 (a^{2}+b^{2})$, we have
\[
\bigg(\sum_{n=1}^{y} \bm{\lambda}(H; q, n-qh)\bigg)^{2}\leq 2\left(\bigg(\sum_{-Ky<n\leq y} \bm{\lambda}(H; q, n)- (K+1)y\bigg)^{2}+ 4(K+1)^{2}y^{2}\right).
\]Applying \eqref{L2:Basic_Moment_H1} and \eqref{L2:EXPECTATION_1}, we obtain
\begin{align*}
\mathbb{E} &\sum_{q\in \mathcal{Q}_{H}\setminus \bm{\mathcal{U'}}_{H}} \bigg(\sum_{n=1}^{y} \bm{\lambda}(H; q, n-qh)\bigg)^{2}\leq\\
 &\leq2\mathbb{E} \sum_{q\in \mathcal{Q}_{H}\setminus \bm{\mathcal{U'}}_{H}} \bigg(\sum_{-Ky<n\leq y} \bm{\lambda}(H; q, n)- (K+1)y\bigg)^{2} +\\
 &+ 8\big((K+1)y\big)^{2}\mathbb{E}\# (\mathcal{Q}_{H}\setminus \bm{\mathcal{U'}}_{H})
 \ll \frac{y^{2}\#\mathcal{Q}_{H}}{H^{M-2}} + \frac{y^{2}\#\mathcal{Q}_{H}}{H^{M-4-2\varepsilon}}\ll
 \frac{y^{2}\#\mathcal{Q}_{H}}{H^{M-4-2\varepsilon}}.
\end{align*} We see from \eqref{L2:SOME_EXP_1} that
\[
\mathbb{E}\sum_{n\in \mathbf{S}\cap [1,y]} \sum_{q\in \mathcal{Q}_{H}\setminus \bm{\mathcal{U'}}_{H}} \bm{\lambda}(H; q, n-qh)\ll \frac{y \#\mathcal{Q}_{H}}{H^{M-4 - 2\varepsilon}}.
\]By summing over $h\leq KH$, we obtain
\[
\mathbb{E}\sum_{n\in \mathbf{S}\cap [1,y]} \sum_{q\in \mathcal{Q}_{H}\setminus \bm{\mathcal{U'}}_{H}}
 \sum_{h\leq KH}\bm{\lambda}(H; q, n-qh)\ll \frac{y \#\mathcal{Q}_{H}}{H^{M-5 - 2\varepsilon}}.
\]We put
\begin{equation}\label{L2:def_E_str}
\bm{\mathcal{E'}}_{H}=\bigg\{n\in \mathbf{S}\cap [1, y]:
\sum_{q\in \mathcal{Q}_{H}\setminus \bm{\mathcal{U'}}_{H}}
 \sum_{h\leq KH}\bm{\lambda}(H; q, n-qh)\geq \frac{ \#\mathcal{Q}_{H}KH}{H^{1+\varepsilon}\sigma_2}\bigg\}.
\end{equation}We have (see also \eqref{sigma_assympt})
\begin{align*}
\mathbb{E} \#\bm{\mathcal{E'}}_{H}&=\mathbb{E}\sum_{n\in \bm{\mathcal{E'}}_{H}}1=
\mathbb{E}\sum_{n\in \bm{\mathcal{E'}}_{H}} \frac{\sum_{q\in \mathcal{Q}_{H}\setminus \bm{\mathcal{U'}}_{H}}
 \sum_{h\leq KH}\bm{\lambda}(H; q, n-qh)}{\sum_{q\in \mathcal{Q}_{H}\setminus \bm{\mathcal{U'}}_{H}}
 \sum_{h\leq KH}\bm{\lambda}(H; q, n-qh)}\leq\\
 &\leq \frac{H^{1+\varepsilon}\sigma_2}{\#\mathcal{Q}_{H} KH}
 \mathbb{E}\sum_{n\in \mathbf{S}\cap [1,y]}\sum_{q\in \mathcal{Q}_{H}\setminus \bm{\mathcal{U'}}_{H}}
 \sum_{h\leq KH}\bm{\lambda}(H; q, n-qh)\ll \frac{y\sigma_2}{H^{M-5-3\varepsilon}}\ll\\
 & \ll \sigma y \frac{\ln H}{H^{M-5 - 3\varepsilon}}\ll \frac{\sigma y}{H^{M-5-4\varepsilon}}.
\end{align*}By Markov's inequality, we have
\begin{equation}\label{L2:E_H_str}
\mathbb{P}\bigg(\#\bm{\mathcal{E'}}_{H} \leq \frac{\sigma y}{H^{1+\varepsilon}}\bigg)=1-O\left(\frac{1}{H^{M-6-5\varepsilon}}\right).
\end{equation} We see from \eqref{L2:H_alpha}, \eqref{L2:E_H} and \eqref{L2:E_H_str} that with probability $1 - O\big((\ln x)^{-\delta \eta}\big)$, where $\eta = \min (\varepsilon, M-6-5\varepsilon) >0$, the relations
\[
\#\bm{\mathcal{E}}_{H} \leq \frac{\sigma y}{H^{1+\varepsilon}},\qquad
\#\bm{\mathcal{E'}}_{H} \leq \frac{\sigma y}{H^{1+\varepsilon}}
\]hold for every $H\in \mathfrak{H}_{1}$ simultaneously. We put
\[
\bm{\mathcal{N}} = (\mathbf{S}\cap [1,y]) \setminus \bigcup_{H\in \mathfrak{H}_{1}}
(\bm{\mathcal{E}}_{H}\cup \bm{\mathcal{E'}}_{H}).
\] Applying \eqref{L2:H_alpha} (see also \eqref{Def_y_z}), we obtain
\[
\# \bigcup_{H\in \mathfrak{H}_{1}}
(\bm{\mathcal{E}}_{H}\cup \bm{\mathcal{E'}}_{H})\ll \frac{\sigma y}{(\ln x)^{\delta (1+\varepsilon)}}
\]which is smaller than $x/ (10 \ln x)$ for large $x$. It remains to verify \eqref{L2:lambda_AP} for $n\in \bm{\mathcal{N}}$. Since $n\notin \bm{\mathcal{E}}_{H}$ and $n\notin \bm{\mathcal{E'}}_{H}$ for every $H\in \mathfrak{H}_{1}$, the inequalities opposite to those in \eqref{L2:def_E} and \eqref{L2:def_E_str} hold, and we have for each $H\in \mathfrak{H}_1$ the asymptotic (see also \eqref{H_range})
\begin{align*}
\sum_{q\in \bm{\mathcal{U'}}_{H}}\sum_{h\leq KH}\bm{\lambda}(H; q, n-qh)&=
\left(1+ O\left(\frac{1}{H^{1+\varepsilon}}\right)\right) \frac{\#\mathcal{Q}_{H}KH}{\sigma_2}=\\
&=\left(1+ O\left(\frac{1}{(\ln x)^{\delta(1+\varepsilon)}}\right)\right) \frac{\#\mathcal{Q}_{H}KH}{\sigma_2}.
\end{align*}
Therefore
\begin{align*}
\sum_{q\in \bm{\mathcal{U'}}}\sum_{h\leq KH_{q}}\bm{\lambda}(H; q, n-qh)&=
\sum_{H\in \mathfrak{H}_{1}}\sum_{q\in \bm{\mathcal{U'}}_{H}}\sum_{h\leq KH}\bm{\lambda}(H; q, n-qh)=\\
&=\left(1+O\left(\frac{1}{(\ln x)^{\delta (1+\varepsilon)}}\right)\right)C_{2} (K+1)y,
\end{align*}where
\[
C_2= \frac{K}{(K+1)y}\sum_{H\in \mathfrak{H}_{1}}\frac{\# \mathcal{Q}_{H} H}{\sigma_{2}}.
\]By \eqref{sigma_assympt}, we have
\[
\sigma_{2}=\sigma_{H^M, z}\sim \frac{C_{1}/\ln z}{C_{1}/\ln H^{M}}\sim \frac{M\ln H}{\ln x}.
\] Applying \eqref{QH_assympt}, we obtain
\[
C_2 \sim \frac{K(\xi-1)}{(K+1)M \xi}\sum_{H\in \mathfrak{H}_{1}} \frac{1}{\ln H}.
\]We have
\[
\sum_{H\in \mathfrak{H}_1}\frac{1}{\ln H}= \frac{1}{2\ln \xi} \sum_{A\leq j \leq B}\frac{1}{j},
\]where
\[
A=\frac{\ln (2 (\ln x)^{\delta})}{2\ln (2\xi)},\qquad B=\frac{\ln ( (\ln x)^{1/2}/ (\xi (\ln\ln x)^{2}))}{2 \ln (2\xi)}.
\]Since
\[
\sum_{n\leq x}\frac{1}{n}=\ln x+ \gamma + O\left(\frac{1}{x}\right),
\] we see that
\[
\sum_{A\leq j \leq B}\frac{1}{j}= \ln \frac{B}{A} + O\left(\frac{1}{A}\right)\sim \ln \left(\frac{1}{2\delta}\right).
\]Hence,
\begin{equation}\label{L2:C2_assympt}
C_2\sim \frac{K(\xi-1)}{2(K+1)M \xi\ln \xi} \ln \left(\frac{1}{2\delta}\right).
\end{equation}

Let $H\in \mathfrak{H}_{2}$. We put
\begin{equation}\label{L2:def_RH}
\bm{\mathcal{R}}_{H}=\left\{n\in \mathbf{S^{*}}\cap [-y,-1]: \left|\sum_{q\in \mathcal{Q}_{H}}\sum_{h\leq KH}\bm{\lambda^{*}}(H;q, n+qh) - \frac{\#\mathcal{Q}_{H} KH}{\sigma_{2}}\right|\geq \frac{\#\mathcal{Q}_{H} KH}{\sigma_{2} H^{(M-3)/2 - \varepsilon}}\right\}.
\end{equation}Similarly, we show that
\[
\mathbb{P}\Big(\# \bm{\mathcal{R}}_{H}\leq \frac{\sigma y}{H^{1+\varepsilon}}\Big)= 1-O\left(\frac{1}{H^{\varepsilon}}\right).
\]We put
\begin{equation}\label{L2:def_RH_str}
\bm{\mathcal{R'}}_{H}=\bigg\{n\in \mathbf{S^{*}}\cap [-y, -1]:
\sum_{q\in \mathcal{Q}_{H}\setminus \bm{\mathcal{V'}}_{H}}
 \sum_{h\leq KH}\bm{\lambda^{*}}(H; q, n+qh)\geq \frac{ \#\mathcal{Q}_{H}KH}{H^{1+\varepsilon}\sigma_2}\bigg\}.
\end{equation}Similarly, we show that
\[
\mathbb{P}\bigg(\#\bm{\mathcal{R'}}_{H} \leq \frac{\sigma y}{H^{1+\varepsilon}}\bigg)=1-O\left(\frac{1}{H^{M-6-5\varepsilon}}\right).
\] Hence, with probability $1 - O\big((\ln x)^{-\delta \eta}\big)$, where $\eta = \min (\varepsilon, M-6-5\varepsilon) >0$, the relations
\[
\#\bm{\mathcal{R}}_{H} \leq \frac{\sigma y}{H^{1+\varepsilon}},\qquad
\#\bm{\mathcal{R'}}_{H} \leq \frac{\sigma y}{H^{1+\varepsilon}}
\]hold for every $H\in \mathfrak{H}_{2}$ simultaneously. We put
\[
\bm{\mathcal{N^{*}}} = (\mathbf{S^{*}}\cap [-y,-1]) \setminus \bigcup_{H\in \mathfrak{H}_{2}}
(\bm{\mathcal{R}}_{H}\cup \bm{\mathcal{R'}}_{H}).
\] We obtain
\[
\# \bigcup_{H\in \mathfrak{H}_{2}}
(\bm{\mathcal{R}}_{H}\cup \bm{\mathcal{R'}}_{H})\ll \frac{\sigma y}{(\ln x)^{\delta (1+\varepsilon)}}
\]which is smaller than $x/ (10 \ln x)$ for large $x$. It remains to verify \eqref{L2:N_star_AP} for $n\in \bm{\mathcal{N^{*}}}$. Since $n\notin \bm{\mathcal{R}}_{H}$ and $n\notin \bm{\mathcal{R'}}_{H}$ for every $H\in \mathfrak{H}_{2}$, the inequalities opposite to those in \eqref{L2:def_RH} and \eqref{L2:def_RH_str} hold, and we have for each $H\in \mathfrak{H}_2$ the asymptotic
\begin{align*}
\sum_{q\in \bm{\mathcal{V'}}_{H}}\sum_{h\leq KH}\bm{\lambda^{*}}(H; q, n+qh)&=
\left(1+ O\left(\frac{1}{H^{1+\varepsilon}}\right)\right) \frac{\#\mathcal{Q}_{H}KH}{\sigma_2}=\\
&=\left(1+ O\left(\frac{1}{(\ln x)^{\delta(1+\varepsilon)}}\right)\right) \frac{\#\mathcal{Q}_{H}KH}{\sigma_2}.
\end{align*}
Therefore
\begin{align*}
\sum_{q\in \bm{\mathcal{V'}}}\sum_{h\leq KH_{q}}\bm{\lambda^{*}}(H; q, n+qh)&=
\sum_{H\in \mathfrak{H}_{2}}\sum_{q\in \bm{\mathcal{V'}}_{H}}\sum_{h\leq KH}\bm{\lambda^{*}}(H; q, n+qh)=\\
&=\left(1+O\left(\frac{1}{(\ln x)^{\delta (1+\varepsilon)}}\right)\right)C_{3} (K+1)y,
\end{align*}where
\[
C_3= \frac{K}{(K+1)y}\sum_{H\in \mathfrak{H}_{2}}\frac{\# \mathcal{Q}_{H} H}{\sigma_{2}}.
\]Similarly, we show that
\begin{equation}\label{L2:C3_assympt}
C_3\sim \frac{K(\xi-1)}{2(K+1)M \xi\ln \xi} \ln \left(\frac{1}{2\delta}\right).
\end{equation}We see from \eqref{Basic_delta}, \eqref{Basic_Delta_2}, \eqref{L2:C2_assympt} and \eqref{L2:C3_assympt} that if $M>6$ is sufficiently close to $6$, $K$ is large enough and $\xi>1$ is sufficiently close to $1$, then for sufficiently large $x$
\[
10^{2\delta} \leq C_2 \leq 100,\qquad 10^{2\delta} \leq C_3 \leq 100.
\]Lemma \ref{L2} is proved.

\end{proof}

\begin{lemma}\label{L3}
Suppose that $0<\delta \leq 1/2$, let $y\geq y_{0}(\delta)$ with $y_{0}(\delta)$ sufficiently large, and let $V$ be a finite set with $\#V\leq y$. Let $1\leq s \leq y$, and suppose that $\mathbf{e}_{1},\ldots, \mathbf{e}_{s}$ are random subsets of $V$ satisfying the following:
\begin{align}
\#\mathbf{e}_{i}&\leq \frac{(\ln y)^{1/2}}{\ln\ln y}\qquad (1\leq i\leq s),\label{L3:I}\\
\mathbb{P}(v\in \mathbf{e}_{i})&\leq y^{-1/2 - 1/100}\qquad (v\in V,\ 1\leq i \leq s),\label{L3:II}\\
\sum_{i=1}^{s}\mathbb{P}(v, v' \in \mathbf{e}_{i})&\leq y^{-1/2}\qquad (v, v' \in V,\ v\neq v'),\label{L3:III}\\
\bigg|\sum_{i=1}^{s} \mathbb{P}(v\in \mathbf{e}_{i}) - C_4 \bigg|&\leq \eta\qquad (v\in V),\label{L3:IV}
\end{align}where $C_4$ and $\eta$ satisfy
\[
10^{2\delta}\leq C_4\leq 100,\qquad \eta \geq \frac{1}{(\ln y)^{\delta}\ln\ln y}.
\]Then there are subsets $e_i$ of $V$, $1\leq i \leq s$, with $e_i$ being in the support of $\mathbf{e}_{i}$ for every $i$, and such that
\[
\# \bigg(V\setminus \bigcup_{i=1}^{s} e_i\bigg)\leq C_{5}\eta \#V,
\]where $C_5 >0$ is an absolute constant.
\end{lemma}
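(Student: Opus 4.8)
The plan is to use the probabilistic/alteration method: draw the $\mathbf{e}_i$ independently, bound the expected number of ``uncovered'' vertices $v\in V\setminus\bigcup_i\mathbf{e}_i$, and then separately control the (rare) bad events coming from hypotheses \eqref{L3:I}--\eqref{L3:III}. First I would fix $v\in V$ and estimate $\mathbb{P}(v\notin\bigcup_i\mathbf{e}_i)$. Writing $p_i=\mathbb{P}(v\in\mathbf{e}_i)$, independence gives $\mathbb{P}(v\notin\bigcup_i\mathbf{e}_i)=\prod_{i=1}^s(1-p_i)$. By \eqref{L3:II} each $p_i\leq y^{-1/2-1/100}$ is tiny, so $\prod_i(1-p_i)=\exp\bigl(-\sum_i p_i+O(\sum_i p_i^2)\bigr)$, and since $\sum_i p_i^2\leq (\max_i p_i)\sum_i p_i\leq y^{-1/2-1/100}\cdot 101=o(\eta)$ (using $\sum_i p_i\leq C_4+\eta\leq 101$ from \eqref{L3:IV}), while $\sum_i p_i\geq C_4-\eta\geq 10^{2\delta}-\eta\geq 1$ for large $y$. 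Hence $\mathbb{P}(v\notin\bigcup_i\mathbf{e}_i)\leq e^{-C_4+\eta+o(\eta)}\leq e^{-1}(1+O(\eta))$, so by linearity $\mathbb{E}\,\#(V\setminus\bigcup_i\mathbf{e}_i)\leq (e^{-1}+O(\eta))\#V$. This is $O(\#V)$ but \emph{not} $O(\eta\#V)$, so a single draw does not suffice — this is the main obstacle, and it is resolved exactly as in \cite{FKMPT}: the surviving set is itself small, so one does not need $e^{-C_4}$ to beat $\eta$; instead one iterates.

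The remedy is a standard \emph{iteration / recursion on $V$}. Concretely: define $W_0=V$; having chosen $\mathbf{e}_1,\dots,\mathbf{e}_s$ restricted to the still-uncovered set $W_{k}$, one wants to argue that with positive probability the uncovered part shrinks geometrically. But the $\mathbf{e}_i$ are fixed random sets, not re-drawable, so instead the clean route is the one used in \cite[Lemma 5]{FKMPT} (the ``efficient cover'' lemma): bound the \emph{second moment} of the number of uncovered vertices and use a greedy/inclusion argument. I would estimate $\mathbb{E}\bigl(\#(V\setminus\bigcup_i\mathbf{e}_i)\bigr)$ as above, and then, crucially, show that after removing a $O(\eta)$-proportion of ``atypical'' outcomes the remaining configuration covers all but $O(\eta\#V)$ vertices. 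The decisive input is \eqref{L3:IV}, the \emph{uniformity} of $\sum_i p_i$ across $v$: it forces $\mathbb{P}(v\notin\bigcup\mathbf{e}_i)$ to be essentially the same constant $c:=e^{-C_4}\in[e^{-100},e^{-1}]$ for every $v$, up to a multiplicative $1+O(\eta)$. One then runs the argument of \cite{FKMPT}: partition the probability space according to which vertices are covered, and a Markov/averaging step plus the second-moment bounds from \eqref{L3:I}--\eqref{L3:III} (which guarantee the $\mathbf{e}_i$ behave like a near-independent family, with pairwise correlations $\sum_i\mathbb{P}(v,v'\in\mathbf{e}_i)\leq y^{-1/2}$ negligible against $\#V\leq y$) show that a \emph{single} realization achieves uncovered count $\leq C_5\eta\#V$.

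In more detail, the order of steps I would carry out is: (1) record $p_i=\mathbb{P}(v\in\mathbf{e}_i)$ and the bounds $1\leq\sum_i p_i\leq 101$, $\sum_i p_i^2=o(\eta)$; (2) compute $\mathbb{P}(v\notin\bigcup\mathbf{e}_i)=\prod(1-p_i)=(1+O(\eta))e^{-\sum p_i}$ uniformly in $v$; (3) sum to get $\mathbb{E}\,\#(V\setminus\bigcup\mathbf{e}_i)=(1+O(\eta))\sum_{v}e^{-\sum_i p_i(v)}$; (4) by \eqref{L3:IV} this lies in $[(e^{-100}-O(\eta))\#V,(e^{-1}+O(\eta))\#V]$; (5) invoke the covering/iteration mechanism of \cite{FKMPT} — using the variance bound afforded by \eqref{L3:I} (sizes are $\leq(\ln y)^{1/2}/\ln\ln y$, so each $\mathbf{e}_i$ touches few vertices and the number of pairs within a single $\mathbf{e}_i$ is $\ll(\ln y)/(\ln\ln y)^2=o(y^{1/2})$) together with \eqref{L3:III} to control $\mathbb{E}\bigl(\#(V\setminus\bigcup\mathbf{e}_i)\bigr)^2$ — to deduce that the uncovered set can be driven down to $C_5\eta\#V$ by passing to a suitable point in the support of $(\mathbf{e}_1,\dots,\mathbf{e}_s)$. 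The hardest part is step (5): turning the $\Theta(\#V)$ expected-uncovered bound into the required $O(\eta\#V)$ bound. I expect this to follow verbatim from the corresponding lemma in \cite{FKMPT}, whose proof uses precisely hypotheses of the shape \eqref{L3:I}--\eqref{L3:IV}; the role of the present write-up is to check that our $\mathbf{e}_i$ — built from the random sets $\mathbf{AP}$ and the weights $\bm{\lambda}$ — satisfy these four hypotheses, which is immediate from Lemma \ref{L1} and the construction in Lemma \ref{L2}.
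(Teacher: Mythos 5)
The paper does not prove Lemma \ref{L3}: its ``proof'' is a one-line citation to \cite[Lemma 3.1]{FKMPT}. Your proposal also ultimately defers the decisive step (your step (5), turning the $\Theta(\#V)$ first-moment bound into the required $O(\eta\#V)$) to that same reference, so in the end you arrive at the same citation the paper makes, after a partial reconstruction. That reconstruction, however, has a genuine gap.

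The main problem is the opening line: you write $\mathbb{P}\bigl(v\notin\bigcup_i\mathbf{e}_i\bigr)=\prod_{i=1}^s(1-p_i)$ ``by independence,'' but joint independence of $\mathbf{e}_1,\ldots,\mathbf{e}_s$ is not among the hypotheses \eqref{L3:I}--\eqref{L3:IV}. Only one-point and two-point marginals of each individual $\mathbf{e}_i$ are controlled, together with the size bound \eqref{L3:I}. One of the points of the lemma (both here and in \cite{FKMPT}) is that the conclusion lets you pick each $e_i$ from the support of $\mathbf{e}_i$ separately, and this is proved by an iterated nibble-type covering argument that never invokes a joint product formula. Your steps (1)--(4) therefore rest on an assumption not available to you.

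Even granting independence, you correctly observe that a single draw only yields an uncovered set of expected size $\approx e^{-C_4}\#V$, a constant fraction of $V$; since $\eta$ may be as small as $1/((\ln y)^\delta\ln\ln y)\to 0$, this is nowhere near the target $C_5\eta\#V$. Your proposed remedy, ``iterate,'' is indeed the right idea, but you yourself note the obstruction (the $\mathbf{e}_i$ are not re-drawable) and do not resolve it; that resolution is precisely the content of \cite[Lemma 3.1]{FKMPT}, and your proposal does not reproduce it. Two minor points: the correct citation is Lemma 3.1 of \cite{FKMPT}, not Lemma 5; and verifying that the specific $\mathbf{e}_q$ built from $\mathbf{AP}$ and $\bm{\lambda}$ satisfy \eqref{L3:I}--\eqref{L3:IV} (your last sentence) is not part of the proof of Lemma \ref{L3} -- it belongs to the deduction of Theorem \ref{T1} in the main text, where the paper carries it out.
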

\begin{proof}
This is \cite[Lemma 3.1]{FKMPT}.
\end{proof}

We are now in a position to prove Theorem \ref{T1}. Let $b$, $\mathcal{U'}$, $\mathcal{V'}$, $\mathcal{N}$, and $\mathcal{N^{*}}$ be the quantities whose existence is asserted by Lemma \ref{L2}.

For each $q\in \mathcal{U'}$, we choose a random integer $\mathbf{n}_{q}$ with probability density function
\begin{equation}\label{Def_n}
\mathbb{P}(\mathbf{n}_{q}=n)=\frac{\lambda(H_{q}; q, n)}{\sum_{-Ky<n'\leq y}\lambda(H_{q}; q, n')}\qquad
(-Ky< n \leq y).
\end{equation} Note that by \eqref{L2:lambda} the denominator is non-zero, so that this is a well-defined probability distribution. For each $q\in \mathcal{U'}$, we define the random subset $\mathbf{e}_{q}$ of $\mathcal{N}$ by the formula
\begin{equation}\label{Def_e}
\mathbf{e}_{q}= \mathcal{N}\cap \{\mathbf{n}_{q}+hq:\ 1\leq h \leq KH_{q}\}.
\end{equation}

We are going to apply Lemma \ref{L3} with $V=\mathcal{N}$, $s=\# \mathcal{U'}$, $\{\mathbf{e}_{1},\ldots, \mathbf{e}_{s}\}=\{\mathbf{e}_{q}:\ q\in \mathcal{U'}\}$, $C_4=C_{2}$ given by Lemma \ref{L2}, and
\begin{equation}\label{eta}
\eta=\frac{1/40}{C_{5} (\ln x)^{\delta}}.
\end{equation}

If $q\in \mathcal{U'}$, then from \eqref{H_range} we have
\[
\#\mathbf{e}_{q}\leq K H_{q}\leq K \frac{(\ln x)^{1/2}}{(\ln \ln x)^{2}}\leq \frac{(\ln y)^{1/2}}{\ln \ln y},
\]if $x$ is large enough. Hence, \eqref{L3:I} holds.

For $n\in \mathcal{N}$ and $q\in \mathcal{U'}$, we have from \eqref{Def_n}, \eqref{Def_e}, \eqref{L2:lambda}, and \eqref{Def_lambda} that
\begin{align*}
\mathbb{P}(n\in \mathbf{e}_{q})&=\sum_{1\leq h \leq KH_{q}} \mathbb{P}(\mathbf{n}_{q}=n-hq)=
\sum_{1\leq h \leq KH_{q}} \frac{\lambda(H_{q}; q, n-hq)}{\sum_{-Ky<n'\leq y}\lambda(H_{q}; q, n')}\ll\\
&\ll \frac{1}{y}\sum_{1\leq h \leq KH_{q}}\lambda(H_{q}; q, n-hq)\ll \frac{1}{y} H_{q} \sigma_{2}^{-KH_q}.
\end{align*}Let us show that
\begin{equation}\label{Ineq_Hq}
H_{q} \sigma_{2}^{-KH_q}\leq x^{1/10}
\end{equation}or, that is equivalent,
\[
\ln H_{q} + K H_{q}\ln (\sigma_{2}^{-1})\leq \frac{\ln x}{10}.
\]We have (if $x$ is large enough)
\[
H_{q}\leq \frac{(\ln x)^{1/2}}{(\ln\ln x)^{2}}\leq \ln x
\] and hence $\ln H_{q} \leq \ln\ln x$. Also,
\[
\sigma_{2}^{-1}\sim \frac{\ln z}{M\ln H}\sim \frac{\ln x}{M\ln H}.
\]Therefore
\[
\sigma_{2}^{-1}\leq \frac{2\ln x}{M\ln H}\leq  \frac{2\ln x}{M\delta\ln \ln x}\leq \ln x,
\]if $x$ is large enough. We obtain
\[
\ln H_{q} + K H_{q}\ln (\sigma_{2}^{-1})\leq \ln\ln x +
K \frac{(\ln x)^{1/2}}{\ln\ln x}\leq \frac{\ln x}{10},
\]if $x$ is large enough. Thus, \eqref{Ineq_Hq} is proved. We obtain
\[
\mathbb{P}(n\in \mathbf{e}_{q})\ll \frac{1}{y^{9/10}}
\]which gives \eqref{L3:II} for $x$ large enough.

For $n\in \mathcal{N}$, we have from \eqref{Def_n}, \eqref{Def_e}, \eqref{L2:lambda}, and \eqref{L2:lambda_AP} that
\begin{align*}
\sum_{q\in \mathcal{U'}} \mathbb{P}(n\in \mathbf{e}_{q})&=
\sum_{q\in \mathcal{U'}}\sum_{1\leq h \leq KH_{q}} \mathbb{P}(\mathbf{n}_{q}=n-hq)=\\
&=
\sum_{q\in \mathcal{U'}}\sum_{1\leq h \leq KH_{q}}\frac{\lambda(H_{q}; q, n-hq)}{\sum_{-Ky<n'\leq y}\lambda(H_{q}; q, n')}=\\
&= C_{2} + O\big((\ln x)^{-\delta - \varepsilon}\big),
\end{align*}and \eqref{L3:IV} follows.

We now turn to \eqref{L3:III}. Observe from \eqref{Def_e} that for distinct $n, n' \in \mathcal{N}$, one can only have $n, n' \in \mathbf{e}_{q}$ if $q$ divides $n-n'$. Since $|n-n'|\leq 2 y$ and $q\geq z > \sqrt{2y}$, there is at most one $q\in \mathcal{U'}$ for which this is the case, and \eqref{L3:III} now follows from \eqref{L3:II}.

Thus, all assumptions of Lemma \ref{L3} hold. By Lemma \ref{L3}, for each $q\in \mathcal{U'}$ there is a number $n_{q}$ such that if we put
\[
e_q := \mathcal{N}\cap \{n_{q}+hq:\ 1\leq h \leq KH_{q}\} \qquad (q\in \mathcal{U'}),
\] then we have
\begin{equation}\label{N_minus}
\# \Big(\mathcal{N}\setminus \bigcup_{q\in \mathcal{U'}} e_{q}\Big)\leq C_{5}\eta\#\mathcal{N}.
\end{equation}Since $C_1 = e^{-\gamma}< 1$ in \eqref{sigma_assympt}, we have
\[
\sigma\leq \frac{1}{\ln x},
\]if $x$ is large enough. By \eqref{L2:i_S}, we have
\begin{equation}\label{N_EST}
\#\mathcal{N} \leq \# (S\cap [1, y])\leq 2 \sigma y\leq \frac{2(x (\ln x)^{\delta} + 1)}{\ln x}\leq
\frac{4 x (\ln x)^{\delta}}{\ln x}.
\end{equation}From \eqref{N_minus},  \eqref{eta} and \eqref{N_EST} we obtain
\[
\# \Big(\mathcal{N}\setminus \bigcup_{q\in \mathcal{U'}} e_{q}\Big)\leq \frac{x}{10 \ln x}.
\]

For each $q\in \mathcal{V'}$, we choose a random integer $\mathbf{m}_{q}$ with probability density function
\[
\mathbb{P}(\mathbf{m}_{q}=n)=\frac{{\lambda}^{*}(H_{q}; q, n)}{\sum_{-y \leq n'< Ky}{\lambda}^{*}(H_{q}; q, n')}\qquad
(-y\leq n < Ky).
\] Note that by \eqref{L2:lambda_star} the denominator is non-zero, so that this is a well-defined probability distribution. For each $q\in \mathcal{V'}$, we define the random subset $\mathbf{r}_{q}$ of $\mathcal{N^{*}}$ by the formula
\[
\mathbf{r}_{q}= \mathcal{N^{*}}\cap \{\mathbf{m}_{q}-hq:\ 1\leq h \leq KH_{q}\}.
\]

We are going to apply Lemma \ref{L3} with $V=\mathcal{N^{*}}$, $s=\# \mathcal{V'}$, $\{\mathbf{e}_{1},\ldots, \mathbf{e}_{s}\}=\{\mathbf{r}_{q}:\ q\in \mathcal{V'}\}$, $C_4=C_{3}$ given by Lemma \ref{L2}, and $\eta$ given by \eqref{eta}.

 Similarly, we show that all assumptions of Lemma \ref{L3} hold. By Lemma \ref{L3}, for each $q\in \mathcal{V'}$ there is a number $m_{q}$ such that if we put
\[
r_q := \mathcal{N^{*}}\cap \{m_{q}-hq:\ 1\leq h \leq KH_{q}\} \qquad (q\in \mathcal{V'}),
\] then we have
\[
\# \Big(\mathcal{N^{*}}\setminus \bigcup_{q\in \mathcal{V'}} r_{q}\Big)\leq \frac{x}{10 \ln x}.
\]

We take $b\equiv n_{q}$ (mod $q$) for all $q\in \mathcal{U'}$ and $b\equiv -N - m_q$ (mod $q$) for all $q\in \mathcal{V'}$ (since $\mathcal{U'}\cap \mathcal{V'}=\emptyset$, by the Chinese Remainder Theorem we can do this). Since
\[
e_{q}\subset \{n\in \mathcal{N}:\ n\equiv n_{q}\ \text{(mod $q$)}\},
\]we find that
\begin{align*}
\# (S_{x/2}(b)\cap [1,y])&\leq \# \big((S\cap [1, y])\setminus \mathcal{N}\big)+
\# \Big(\mathcal{N}\setminus \bigcup_{q\in \mathcal{U'}}e_{q}\Big)\leq\\
&\leq \frac{x}{10 \ln x} + \frac{x}{10 \ln x} = \frac{x}{5 \ln x}.
\end{align*}Similarly, since
\[
r_{q}\subset \{n\in \mathcal{N^{*}}:\ n\equiv m_{q}\ \text{(mod $q$)}\},
\]we find that
\[
\# (S_{x/2}(-N -b)\cap [-y,-1])\leq  \frac{x}{5 \ln x}.
\]

Let us denote $\mathcal{A}:= S_{x/2}(b)\cap [1,y]$, $\mathcal{D}:= S_{x/2}(-N -b)\cap [-y,-1]$, $L_1 :=\{p: x/2< p\leq (3x)/4\}$,
$L_2:=\{p: (3x)/4 < p \leq x$\}. Then we have
\[
\# L_1 > \frac{x}{5\ln x}\geq \#A,\qquad \# L_2 > \frac{x}{5\ln x}\geq \#D,
\]if $x$ is large enough. Hence, we may pair up each element $a\in \mathcal{A}$ with a unique prime $q=q_{a}\in L_{1}$, and pair up each element $d\in \mathcal{D}$ with a unique prime $q=q_{d}\in L_{2}$. We take $b\equiv a$ (mod $q_{a}$) for every $a\in \mathcal{A}$ and $b\equiv -N - d$ (mod $q_{d}$) for every $d\in \mathcal{D}$ (since $L_1 \cap L_{2}=\emptyset$, by the Chinese Remainder Theorem we can do this). We obtain
\[
S_{x}(b)\cap [1,y]=\emptyset,\qquad S_{x}(-N -b)\cap [-y,-1]=\emptyset.
\]

Let $x=(1/4)\ln N$ and let $N$ be large enough. Then
\[
N^{1/5} \leq P_{x}\leq N^{1/3},\qquad \frac{1}{5} \ln N (\ln \ln N)^{\delta} \leq y \leq \frac{1}{3}\ln N (\ln \ln N)^{\delta},
\] if $N$ is large enough. Let $b_1\in \mathbb{Z}$ be such that $b_1\equiv b$ (mod $P_{x}$) and $-4 P_{x}\leq b_{1}< - 3P_{x}$. Then $|b_1|<N/9$ (if $N$ is large enough) and
\[
S_{x}(b_1)\cap [1,y]=\emptyset,\qquad S_{x}(-N -b_{1})\cap [-y,-1]=\emptyset.
\]
Let $b_{2}:= -b_1$. Then
\[
S_{x}\cap (b_2 + [1, y]) = \emptyset, \qquad S_{x}\cap (N-b_2 + [-y,-1])=\emptyset.
\]Let $I_{1}= b_2 + [1, y]$, $I_{2}= N-b_2 + [-y,-1]$. We have
\[
x< b_2+1 < b_2+y < \frac{N}{4},\qquad \frac{3N}{4}< N-b_2-y< N-b_2-1<N,
\]if $N$ is large enough. In particular, all numbers $n\in I_1 \cup I_{2}$ are composite. We put $n_1 = b_2 + [y/2]$, $n_2= N-b_2 - [y/2]$. Hence, $n_1$ and $n_2$ are positive integers, $n_1+ n_2 = N$ and
\[
f(n_i)\geq [y/2]\geq \frac{1}{11} \ln N (\ln \ln N)^{\delta},\qquad i=1, 2.
\]Theorem \ref{T1} is proved.

\section{Acknowledgements}

I thank Sergei Konyagin for introducing the author to this problem.

\end{document}